\def\thetitle{Fiberwise homogeneous fibrations of the 3-dimensional space forms by geodesics}
\def\theauthor{Haggai Nuchi}
\newcommand{\R}{\mathbb{R}}
\newcommand{\F}{\mathcal{F}}
\newcommand{\C}{\mathbb{C}}
\newcommand{\Isom}{\operatorname{Isom}}
\renewcommand{\Re}{\operatorname{Re}}
\renewcommand{\Im}{\operatorname{Im}}
\newtheorem{thm}{Theorem}[section]
\newtheorem{lem}[thm]{Lemma}
\theoremstyle{definition}
\newtheorem{dfn}[thm]{Definition}
\theoremstyle{remark}
\newtheorem*{rmk}{Remark}
\title[\small \thetitle]{\LARGE \thetitle}
\author{\theauthor}
\date{}
\begin{document}
  \begin{abstract}
    A fibration of a Riemannian manifold is {\em fiberwise homogeneous} if there are isometries of the manifold onto itself, taking any given fiber to any other one, and preserving fibers.

    Examples are fibrations of Euclidean $n$-space by parallel $k$-planes, and the Hopf fibrations of the round $n$-sphere by great $k$-spheres.

    In this paper, we describe all the fiberwise homogeneous fibrations of Euclidean and hyperbolic 3-space by geodesics. Our main result is that, up to fiber-preserving isometries, there is precisely a one-parameter family of such fibrations of Euclidean 3-space, and a two-parameter family in hyperbolic \mbox{3-space}.

    By contrast, we show in another paper that the only fiberwise homogeneous fibrations of the round 3-sphere by geodesics (great circles) are the Hopf fibrations.

    More generally, we show in that same paper that the Hopf fibrations in all dimensions are characterized, among fibrations of round spheres by smooth subspheres, by their fiberwise homogeneity.
  \end{abstract}
  \maketitle
  
  \section{Introduction}
    \subsection{Background}
      The Hopf fibrations of $S^{2n+1}$ by great circles, $S^{4n+3}$ by great $3$-spheres, and $S^{15}$ by great $7$-spheres have a number of interesting properties. For one, their fibers are parallel, and moreover, the fibrations are characterized by this property \cite{gromoll1988low, wilking2001index}.
    
      The Hopf fibrations are also {\em fiberwise homogeneous}.
      \begin{dfn}
        Let $\F$ be a fibration of a riemannian manifold $(M,g)$. We say that $\F$ is {\em fiberwise homogeneous} if for any two fibers there is an isometry of $(M,g)$ taking fibers to fibers and taking the first given fiber to the second given fiber.
      \end{dfn}
    
      In another paper \cite{nuchi2014hopf}, we prove that the Hopf fibrations are characterized by being fiberwise homogeneous among all fibrations of spheres by smooth subspheres.
      
      But in order for us to really {\em feel} what ``fiberwise homogeneous'' means, we need more examples. We don't gain any insight into this property just from knowing that the Hopf fibrations are the only such fibrations of spheres by subspheres. The point of this paper is to provide examples. See also \cite{nuchi2014surprising} for an example of a fiberwise homogeneous fibration of the Clifford torus $S^3\times S^3$ in the 7-sphere by great \mbox{3-spheres} which is not part of a Hopf fibration.
      
      The main theorem of this paper is the following:
      \begin{thm}\label{Thm:SpaceFormFWHs}
        The fiberwise homogeneous fibrations of Euclidean 3-space by lines form a 1-parameter family. The fiberwise homogeneous fibrations of Hyperbolic 3-space by geodesics form a 2-parameter family, plus one additional fibration.
      \end{thm}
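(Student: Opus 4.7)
The approach is to read off the classification from the Lie algebra of the group $G \le \Isom(M)$ consisting of all fiber-preserving isometries of a candidate fibration $\F$ (with $M = \R^3$ or $\mathbb{H}^3$). By hypothesis $G$ acts transitively on the $2$-dimensional leaf space $M/\F$, so $\dim G \ge 2$. For the upper bound, fix a fiber $\gamma$ and consider its setwise stabilizer $G_\gamma$: the restriction $G_\gamma \to \Isom(\gamma) \cong \Isom(\R)$ lands in the $1$-dimensional subgroup of translations along $\gamma$, while its kernel fixes $\gamma$ pointwise and is therefore contained in the $O(2)$ of rotations about $\gamma$ (in both $\R^3$ and $\mathbb{H}^3$). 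Hence $\dim G_\gamma \le 2$ and $\dim G \le 4$.

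I then proceed by cases on $\dim G \in \{2,3,4\}$, and within each case by the conjugacy class of the identity component $G_\gamma^0$ inside $\Isom(M)$. When $\dim G = 4$, $G_\gamma^0$ contains a one-parameter subgroup translating $\gamma$ along itself, and the associated Killing field has $\gamma$ as an integral curve; in $\R^3$ this Killing field is parallel, forcing every fiber to have the same direction and recovering the standard parallel fibration, while in $\mathbb{H}^3$ it is a hyperbolic translation along geodesics sharing a common ideal endpoint. When $\dim G = 3$, the generator of $G_\gamma^0$ is a Killing field whose action on $\gamma$ combines a tangential (translation) and a normal (rotation) component, and the ratio of the two provides a continuous modulus; this yields the $1$-parameter family in $\R^3$ and the generic part of the $2$-parameter family in $\mathbb{H}^3$, the extra hyperbolic parameter coming from the richer zoo of conjugacy classes of $1$-parameter subgroups of $\Isom(\mathbb{H}^3) \cong \mathrm{PSL}(2,\C)\rtimes\mathbb{Z}/2$ (hyperbolic, elliptic, parabolic, loxodromic). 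Finally, when $\dim G = 2$ the group acts with discrete fiber stabilizer, and I enumerate the $2$-dimensional Lie subalgebras of $\mathfrak{isom}(M)$ whose orbit structure admits a transverse complete-geodesic foliation; in $\R^3$ nothing new appears, while in $\mathbb{H}^3$ this contributes the one isolated exceptional fibration promised by the theorem.

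\textbf{Main obstacle.} The delicate step is not producing candidate pairs $(G,\gamma)$ but verifying that the $G$-translates of $\gamma$ actually form a fibration: they must be pairwise disjoint complete geodesics whose union is all of $M$. I plan to discharge this in each case by working in an explicit model---Cartesian coordinates for $\R^3$, upper half-space for $\mathbb{H}^3$---writing the candidate Lie algebra as explicit Killing fields, and checking the orbit structure at a generic point. A companion subtlety is counting moduli correctly up to fiber-preserving isometry: two candidate subalgebras that are conjugate inside $\Isom(M)$ determine equivalent fibrations, so I normalize within each conjugacy class before extracting the parameter count. The enumeration of $1$-parameter subgroups of $\mathrm{PSL}(2,\C)$ is precisely where the asymmetry between the Euclidean and hyperbolic moduli emerges, and it is also what produces the additional isolated hyperbolic fibration.
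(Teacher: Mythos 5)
Your overall strategy---pass to the closed group of fiber-preserving isometries, bound its dimension, enumerate conjugacy classes of candidate subgroups/subalgebras in explicit models, and check orbit structures---is the same skeleton as the paper's proof (which runs through the known lists of closed connected subgroups of $\Isom(E^3)$ and $\Isom(H^3)$ case by case). But your roadmap misassigns where the answer actually comes from, in a way that is not just a bookkeeping slip. In $H^3$ the two-parameter family $\F_z$ is \emph{not} produced by the ``richer zoo of conjugacy classes of one-parameter subgroups'': the loxodromic-pitch modulus turns out to contribute nothing (every group of type $ScrewHom$, like $T(2)$, $Hom$, $E(2)$ and $Sim$, preserves only the fibration by geodesics through a common ideal point). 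Instead, the whole two-parameter family is carried by a single $2$-dimensional group $\langle Hyp,Par\rangle$ acting simply transitively on fibers, and the two parameters are the choice of the seed geodesic (its second ideal endpoint $z$), i.e.\ parameters of the fiber relative to the group, not of the group. Correspondingly, the ``one additional fibration'' is the vertical-line fibration with the \emph{largest} symmetry group ($Sim$, dimension $4$), not an exceptional object arising in your $\dim G=2$ case---you have the attribution essentially reversed. Even in $\R^3$ your proposed mechanism is off: for $\F_t$ the identity component of the fiber stabilizer is a pure translation along the fiber (no normal component), and the modulus $t$ is the pitch of the screw one-parameter subgroup of $\overline{E(2)}_t$, which stabilizes no fiber; the ``tangential-to-normal ratio in $G_\gamma^0$'' does not detect it.

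The second genuine gap is the deduplication step. You propose to ``normalize within each conjugacy class before extracting the parameter count,'' but conjugating the subalgebra cannot do this job: in $H^3$ infinitely many pairwise inequivalent fibrations $\F_z$ share literally the same group, and conversely one must still prove that distinct members of each family are inequivalent as fibrations. The paper needs real arguments here: in $E^3$, the unit field along $\F_t$ is a curl eigenfield with eigenvalue $t$, which is an isometry invariant up to sign; in $H^3$, one shows any equivalence between $\F_z$ and $\F_{z'}$ must lie in the normalizer of $\langle Hyp,Par\rangle$, reduces it to rotations/reflections of the plane at infinity, and then compares cross-ratio-type data of the ideal endpoints to force $z=z'$ (this is why the seeds are restricted to the region $S$ in the first place). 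Your plan contains no substitute for either argument, so as written it establishes at most that every fiberwise homogeneous fibration appears on a candidate list, not that the list is a $1$-parameter (resp.\ $2$-parameter plus one) family up to equivalence. A smaller point: to work with identity components at all you need something like the paper's Lemma~\ref{Lem:ClosedConnected}, guaranteeing that the identity component of the closure of a transitive fiber-preserving group is still transitive; this is easy but should be said.
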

      \begin{rmk}
        As part of the proof of Theorem~\ref{Thm:SpaceFormFWHs}, we construct the fibrations explicitly. See Figure~\ref{Fig:Examples} for pictures of typical examples. Technically, we classify fibrations up to ``equivalence''; we don't consider e.g.~a fibration by parallel horizontal lines as different from a fibration by parallel vertical lines. There are many fibrations of Euclidean and hyperbolic spaces by geodesics; see for instance two papers by Salvai \cite{salvai2009global} and Godoy--Salvai \cite{godoysalvai2014}.
      \end{rmk}
      \begin{figure}[h]
        \centering
        \includegraphics[width=0.35\textwidth]{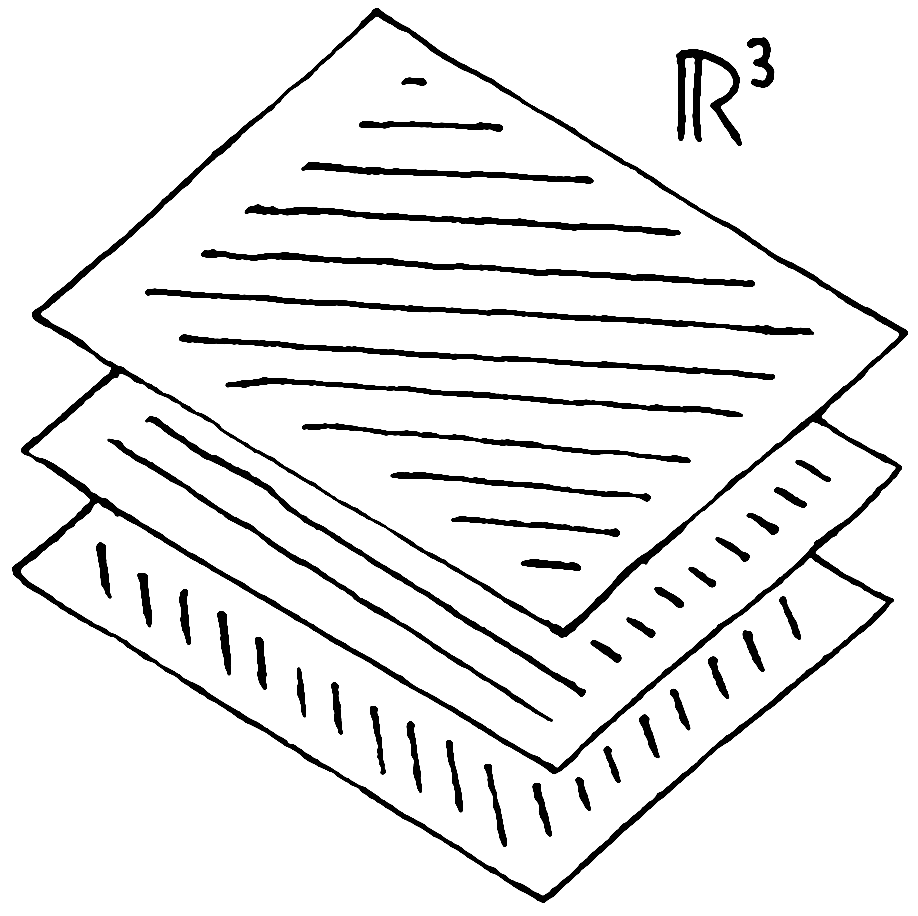}~        \includegraphics[width=0.55\textwidth]{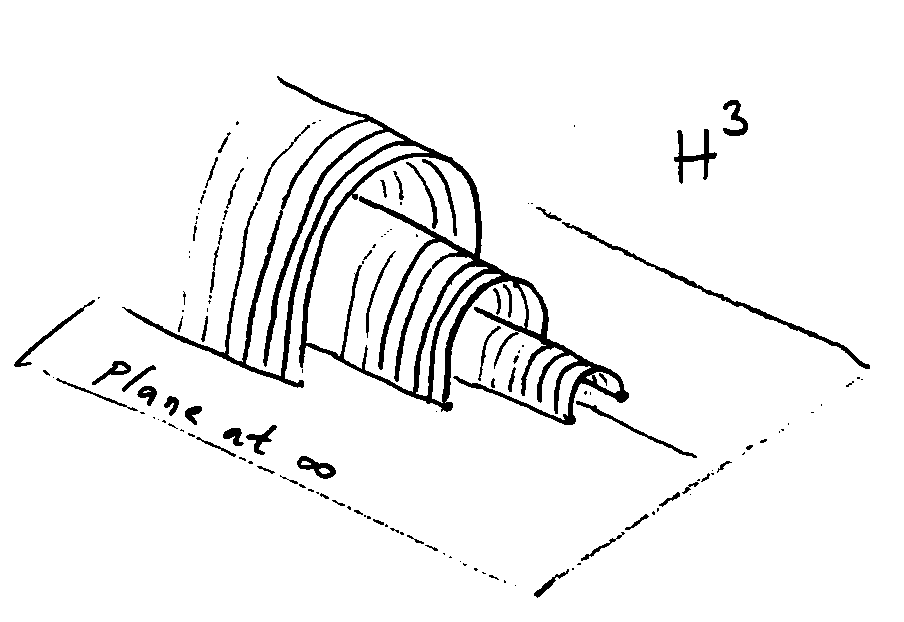}
        \caption{Typical fiberwise homogeneous fibrations of $\R^3$, left, and $H^3$, right. In the pictured fibration of $\R^3$, we layer Euclidean space by parallel planes, and fiber each plane by parallel lines whose direction changes at a constant rate as we move from plane to plane. In the pictured fibration of $H^3$, geodesics form nested tunnels over a line in the plane at infinity in the upper half-space model of $H^3$.}
        \label{Fig:Examples}
      \end{figure}
      
      Here is a rough outline of the classification of the fiberwise homogeneous fibrations of Euclidean and Hyperbolic 3-space by geodesics. Let $X$ denote either $E^3$ or $H^3$ (the proof has a similar form in both cases). Start with a hypothetical fiberwise homogeneous fibration $\F$ of $X$ by geodesics. Let $G$ be a subgroup of $\Isom(X)$ which acts transitively on the fibers of $\F$. For technical reasons, we may assume without loss of generality that $G$ is connected and closed. The conjugacy classes of closed connected subgroups of $\Isom(X)$ are well understood in the literature. By letting $G$ range across all closed connected subgroups of $\Isom(X)$, we can analyze the geometry of $G$ to discover which fiberwise homogeneous fibrations by geodesics it could possibly act on. For example, 1-parameter groups of isometries are too small to act transitively on $\F$, and the full isometry group is too large to preserve $\F$. Like Goldilocks in her interactions with the three bears, we check each intermediate group of isometries to see if it is ``just right.'' In this way we discover all fiberwise homogeneous fibrations by geodesics. Finally, after eliminating redundancies in the list (some may be equivalent to one another), we arrive at the list of fiberwise homogeneous fibrations of $X$ by geodesics, up to equivalence.
      
      That concludes the outline of the proof. In Section~\ref{Sec:Technical}, we have some technical preliminaries. In Sections~\ref{Sec:Euc} and~\ref{Sec:Hyp}, we describe the fiberwise homogeneous fibrations of Euclidean and Hyperbolic 3-space, respectively.

      The interested reader can consult the bibliography of \cite{nuchi2014hopf} for a more detailed list of related papers.
      
    \subsection{Acknowledgments}
      This article is an extended version of a portion of my doctoral dissertation. I am very grateful to my advisor Herman Gluck, without whose encouragement, suggestions, and just overall support and cameraderie, this would never have been written.
      
      Thanks as well to the Math Department at the University of Pennsylvania for their support during my time there as a graduate student.
    
  \section{Technical Preliminaries}\label{Sec:Technical}
    The following definition and lemmas will be useful to us in both of the later sections, so we gather them here.
    \begin{dfn}
      Let $\F_1$ and $\F_2$ be two fibrations of a Riemannian manifold $(M,g)$. We say that $\F_1$ and $\F_2$ are {\em equivalent} if $\exists T\in \Isom(M,g)$ such that $\F_2=T(\F_1)$.
    \end{dfn}
    \begin{lem}\label{Lem:equivalent}
      If $\F_1$ is equivalent to $\F_2$ and $\F_1$ is fiberwise homogeneous, then $\F_2$ is also fiberwise homogeneous. Moreover, the groups of isometries acting on $\F_1$ and $\F_2$ are conjugate to one another in the full isometry group of $(M,g)$.
    \end{lem}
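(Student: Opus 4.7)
The plan is to prove both statements by direct conjugation by the isometry $T$ that implements the equivalence between $\F_1$ and $\F_2$. Unpacking the definition, equivalence gives us some $T\in\Isom(M,g)$ with $\F_2=T(\F_1)$, so every fiber of $\F_2$ is of the form $T(F)$ for a unique fiber $F$ of $\F_1$, and vice versa. This bijection between fiber sets is the only real ingredient I need.

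For the first statement, I would take two arbitrary fibers $F_2,F_2'$ of $\F_2$, write them as $F_2=T(F_1)$ and $F_2'=T(F_1')$, and invoke fiberwise homogeneity of $\F_1$ to get an isometry $S\in\Isom(M,g)$ preserving $\F_1$ and sending $F_1$ to $F_1'$. Then I would verify that $TST^{-1}$ is an isometry of $(M,g)$ that preserves $\F_2$ (since $TST^{-1}(\F_2)=TST^{-1}T(\F_1)=TS(\F_1)=T(\F_1)=\F_2$) and sends $F_2$ to $F_2'$ (since $TST^{-1}(T(F_1))=T(S(F_1))=T(F_1')=F_2'$). This is the whole argument.

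For the moreover part, let $G_1\le\Isom(M,g)$ denote the group of fiber-preserving isometries of $\F_1$, and $G_2$ the corresponding group for $\F_2$. The computation above shows $TG_1T^{-1}\subseteq G_2$, and by symmetry (running the same argument with $T^{-1}$ implementing $\F_1=T^{-1}(\F_2)$) we get $T^{-1}G_2T\subseteq G_1$, i.e., $G_2\subseteq TG_1T^{-1}$. Hence $G_2=TG_1T^{-1}$, so they are conjugate in $\Isom(M,g)$ as claimed.

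There is no real obstacle here; the lemma is essentially a tautology once one writes down what equivalence and fiberwise homogeneity say. The only mild care is to state the argument for \emph{any} subgroup of $\Isom(M,g)$ that witnesses the fiberwise homogeneity, so that the conjugation conclusion is unambiguous; this is handled by applying it to the full group of fiber-preserving isometries on each side.
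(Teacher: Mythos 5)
Your proposal is correct and uses the same idea as the paper: conjugating a fiber-preserving, fiber-transitive group (or isometry) by the equivalence $T$, so that $TGT^{-1}$ acts transitively on $\F_2=T(\F_1)$. The paper's proof is just a one-line version of this conjugation argument; your extra care with the full fiber-preserving groups and the reverse inclusion only makes the ``moreover'' clause more explicit.
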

    \begin{proof}
      If $G \subset \Isom(M,g)$ acts isometrically and transitively on the fibration $\F_1$, and $\F_2=T(\F_1)$, then $TGT^{-1}$ acts isometrically and transitively on $\F_2$.
    \end{proof}
    The following lemma will be quite useful to us. If we have a fiberwise homogeneous fibration $\F$ of a Riemannian manifold $M$, it will allow us to assume that a group $G$ acting transitively on $\F$ is a connected Lie subgroup of $\Isom(M)$.
    \begin{lem}\label{Lem:ClosedConnected}
      Let $M$ be a connected Riemannian manifold, and let $G$ be a subgroup of $\Isom(M)$. Denote by $\overline{G}_0$ the identity component of the closure of $G$. Suppose $G$ acts transitively on a smooth fibration $\F$ of $M$. Then, $\overline{G}_0$ acts transitively on $\F$ as well.
    \end{lem}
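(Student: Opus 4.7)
The plan is to reduce the statement to a claim about the leaf space $B = M/\F$. Since $\F$ is a smooth fibration, $B$ is a smooth manifold and the projection $\pi\colon M \to B$ is a smooth submersion; moreover, since $M$ is connected, so is $B$. The hypothesis that $G$ acts transitively on $\F$ is precisely the statement that each $g \in G$ descends to a diffeomorphism of $B$ and that the induced $G$-action on $B$ is transitive. The lemma is therefore equivalent to showing that $\overline{G}_0$ descends to $B$ and acts transitively there.

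First I would verify that the closure $\overline{G}$ preserves $\F$ and so acts on $B$. The condition that $g \in \Isom(M)$ preserves $\F$ is that $\pi(x) = \pi(y)$ implies $\pi(gx) = \pi(gy)$ for all $x, y \in M$. By Myers--Steenrod, $\Isom(M)$ is a Lie group acting smoothly on $M$, so continuity of $\pi$ makes this a closed condition on $g$. Hence $\overline{G}$, and in particular $\overline{G}_0$, preserves $\F$. Since $G \subseteq \overline{G}$ is already transitive on $B$, so is $\overline{G}$.

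The remaining step is to promote transitivity from $\overline{G}$ to $\overline{G}_0$. For any $b \in B$, the orbit map $\phi_b \colon \overline{G} \to B$, $h \mapsto h \cdot b$, has constant rank by equivariance, and that rank must equal $\dim B$ since $\phi_b$ is surjective. The restriction of $\phi_b$ to $\overline{G}_0$ has the same differential at the identity (because $\overline{G}_0$ and $\overline{G}$ share a Lie algebra), so it is a submersion at the identity and, again by equivariance, everywhere. Thus each $\overline{G}_0$-orbit in $B$ is open. These open orbits partition the connected space $B$, forcing a single orbit; so $\overline{G}_0$ acts transitively on $B$, and hence on $\F$.

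I expect the main technical point to be the closedness of the condition ``$g$ preserves $\F$'' in $\Isom(M)$, together with the standing hypothesis that $B$ is a genuine smooth manifold (which rules out pathological leaf spaces that can arise for general foliations). Both are handled by standard input: Myers--Steenrod for the Lie group structure on $\Isom(M)$, and the definition of a smooth fibration for the manifold structure on $B$. With those in hand, the conclusion is the familiar principle that the identity component of a Lie group acting transitively on a connected manifold still acts transitively.
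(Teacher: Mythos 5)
Your proof is correct, and its first half coincides with the paper's: the closure $\overline{G}$ preserves the fibration because ``takes fibers to fibers'' is a closed condition on isometries (the paper phrases this as a limit argument, $g_n \to g$), and $\overline{G}$ inherits transitivity from $G$. Where you diverge is in passing from $\overline{G}$ to its identity component. The paper identifies the base $B$ with the coset space $\overline{G}/H$, where $H$ is the isotropy subgroup of a point, and argues that $H$ must meet every connected component of $\overline{G}$ (else $B$ would be disconnected), so the image of $\overline{G}_0$ in $\overline{G}/H$ is everything. You instead stay in $B$ and run the open-orbit argument: the orbit maps of $\overline{G}$ have constant rank by equivariance, are surjective, hence are submersions, so the $\overline{G}_0$-orbits are open and connectedness of $B$ forces a single orbit. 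Both are standard proofs that the identity component of a Lie group acting transitively on a connected manifold still acts transitively; yours trades the homogeneous-space identification for a bit of differential topology, and you should flag two small points you use implicitly: that ``surjective plus constant rank implies full rank'' needs second countability of $\overline{G}$ (or Sard's theorem) to rule out a measure-zero image, and that each element of $\overline{G}$ descends to a smooth map of $B$ because $\pi$ is a surjective submersion. The paper's version instead leans on the smooth structure of $\overline{G}/H$ and its identification with $B$. Either way the essential inputs are the same: Myers--Steenrod, the manifold structure on the base of the fibration, and connectedness of $B$.
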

    \begin{proof}
      Let $G$ preserve the fibers of $\F$. Let $\{g_n\}_{n=1}^{\infty} \subset G \subseteq \Isom(M)$, and let $g_n \to g\in \Isom(M)$. If each $g_n$ preserves each fiber of $\F$, then the limit $g$ clearly does as well. Thus $\overline{G}$ preserves the fibers of $\F$. Also $G\subseteq \overline{G}$, so $\overline{G}$ acts transitively on $\F$ as well.
      
      Now let $G$ be a closed disconnected Lie subgroup of $\Isom(M)$ acting transitively on $\F$. Let the manifold $B$ be the base of $\F$. Then $B$ has the structure of a connected homogeneous space, and hence is diffeomorphic to $G/H$, where $H$ is the isotropy subgroup of $G$ fixing a point. Let $G_i$ be the connected components of $G$, $G_0$ the identity component. The subgroup $H$ intersects every $G_i$, or else $B$ would be disconnected, so there are $g_i\in H\cap G_i$ for all $i$. Then $g_i H = H$, from which it follows that the image of $G_0$ intersects the image of every $G_i$ in $G/H$. But as $g_0$ ranges across all elements of $G_0$, $g_0 g_i$ ranges across all elements of $G_i$, so $g_0 g_i H = g_0 H$ and the image of $G_0$ is identical to the image of each $G_i$. Thus the image of $G_0$ is all of $G/H$. Therefore the identity component of $G$ acts transitively on $B$ and hence on $\F$.
    \end{proof}

  \section{Euclidean 3-space}\label{Sec:Euc}
    In order to carry out the classification which was just briefly described, we need a description of the conjugacy classes of closed connected subgroups of the isometry group of $E^3$. A full list can be found in \cite{beckers2008subgroups}. We reproduce the list here, and describe each element in detail. After describing these groups, we will give the list of fiberwise homogeneous fibrations of Euclidean 3-space, followed by the proof that our list is exhaustive.
    
    In the following list, each item is written as ``Name of group, dimension of group. Description of group.'' For the name of the group, we either choose a name from \cite{beckers2008subgroups}, or invent our own; we try to give descriptive names where possible. In the description of the group, we choose a concrete representative of the conjugacy class, acting on the usual coordinates $(x,y,z)$ of $E^3$. We remark that \cite{beckers2008subgroups} actually describes the subalgebras of the Lie algebra of $\Isom(E^3)$, but these are in one-to-one correspondence with closed connected subgroups of $\Isom(E^3)$ via exponentiation.
    
    Closed connected groups of isometries of $E^3$:
    \begin{itemize}
      \item $\{1\},\dim 0.$ The trivial group.
      \item $T(1),\dim 1.$ The group of translations along the $z$-axis.
      \item $SO(2),\dim 1.$ The group of rotations around the $z$-axis.
      \item $\overline{SO(2)}_t,\dim 1.$ The group of screw-translations along the $z$-axis with pitch $t$. Distinct $t\neq 0$ give distinct conjugacy classes. On the Lie algebra level, this group is generated by $-t \frac{\partial}{\partial x} + t \frac{\partial}{\partial y} + \frac{\partial}{\partial z}$. The notation is meant to remind us that this group is the universal covering group of $SO(2)$. These are distinct for different $t$, except that $\overline{SO(2)}_{t}$ is conjugate to $\overline{SO(2)}_{-t}$ by an orientation-reversing isometry of $E^3$.
      \item $SO(2)\times T(1),\dim 2.$ The group of rotations around and translations along the $z$-axis.
      \item $T(2), \dim 2.$ The group of translations in the $xy$-plane.
      \item $\overline{E(2)}_t, \dim 3.$ A universal covering group of the isometries of the Euclidean plane. It consists of translations in the $xy$-plane together with screw-translations along the $z$-axis with pitch $t$. It's generated by $T(2)$ and $\overline{SO(2)}_{t}$. Just as with $\overline{SO(2)}_{t}$, these are distinct for different $t$, except that $\overline{E(2)}_t$ is conjugate to $\overline{E(2)}_{-t}$ by an orientation-reversing isometry of $E^3$.
      \item $T(3), \dim 3.$ The group of translations of Euclidean 3-space.
      \item $E(2), \dim 3.$ The group of translations and rotations of the $xy$-plane.
      \item $SO(3), \dim 3.$ The group of rotations around the origin $(0,0,0)$.
      \item $E(2)\times T(1), \dim 4.$ The group generated by translations and rotations of the $xy$-plane, and translations in the $z$-axis.
      \item $E(3), \dim 6.$ The identity component of the full isometry group of $E^3$. Generated by translations and rotations.
    \end{itemize}
    
    Now we describe all fiberwise homogeneous fibrations of $E^3$ by geodesics. Let $\F_t$ (See Figure~\ref{Fig:EucFib}) be the fibration consisting of the integral curves of the vector field
      \[ v_t = \cos(tz)\frac{\partial}{\partial x} + \sin(tz)\frac{\partial}{\partial y}. \]
    In other words, layer $E^3$ by parallel planes. Fiber each plane by parallel straight lines, but vary the angle of the lines at a constant rate as we move from plane to plane. The variable $t$ controls the rate of change of the angle.
    \begin{figure}[h]
      \centering
      \includegraphics[width=0.5\textwidth]{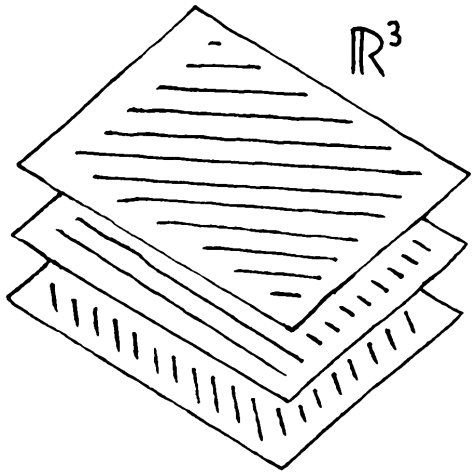}
      \caption{A fiberwise homogeneous fibration of $\R^3$.}
      \label{Fig:EucFib}
    \end{figure}
    
    \begin{thm}\label{Thm:EuclideanFibrations}
      The fiberwise homogeneous fibrations of Euclidean 3-space by lines, up to equivalence, are precisely $\F_t$ for each $t\in\R, t\geq 0$, and no others.
    \end{thm}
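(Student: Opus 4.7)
The plan is to run a case analysis through the enumerated list of closed connected subgroups of $\Isom(E^3)$. Given a fiberwise homogeneous fibration $\F$ with some transitive group $G$, Lemma~\ref{Lem:ClosedConnected} allows me to assume $G$ is closed and connected; and since the base of $\F$ is two-dimensional, $\dim G \geq 2$. This immediately rules out the trivial group, $T(1)$, $SO(2)$, and $\overline{SO(2)}_t$. For each of the remaining groups I will view $\F$ as a single $G$-orbit in the space of lines in $E^3$: that orbit must be two-dimensional and must foliate $E^3$. The core step in each case is to pick a candidate line $L_0$, describe the orbit $G\cdot L_0$ explicitly, and check whether the resulting family of lines fills $E^3$ without overlap.

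I would first dispose of the negative cases. For $G = E(3)$ the line space is a single orbit, so no proper fibration is preserved. For $G = SO(3)$, orbits of lines through the origin form pencils meeting at $0$, and orbits of other lines are three-dimensional, so neither foliates. For $G = E(2)$ every orbit of a line sits inside a single horizontal plane and thus cannot cover $E^3$ by $1$-dimensional leaves. For $G = SO(2)\times T(1)$ the $z$-axis is itself a $G$-orbit and must therefore be a fiber of any $G$-invariant fibration; but $G$ fixes the $z$-axis setwise, contradicting transitivity on fibers.

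The positive cases produce the fibrations $\F_t$. For $G$ equal to $T(2)$, $T(3)$, or $E(2)\times T(1)$, the only two-dimensional orbit of lines that foliates $E^3$ is a family of parallel lines, and all such orbits are conjugate in $\Isom(E^3)$ to the family of vertical lines, giving $\F_0$. For $G = \overline{E(2)}_s$ with $s\neq 0$, applying the screw-motion of pitch $s$ to a horizontal line through the origin and then translating by $T(2)$ yields the integral curves of the vector field $v_{1/s}$, i.e.\ the fibration $\F_{1/s}$. This exhausts the possible fibrations. To reduce to $t \geq 0$, note that an orientation-reversing reflection conjugates $\overline{E(2)}_s$ to $\overline{E(2)}_{-s}$ and carries $\F_t$ to $\F_{-t}$. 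To distinguish the $\F_t$ for distinct non-negative $t$, I would identify the maximal connected isometry group of $\F_t$ as $E(2)\times T(1)$ for $t=0$ and as $\overline{E(2)}_{1/t}$ for $t>0$; since $\Isom(E^3)$ contains no scalings, the groups $\overline{E(2)}_s$ are pairwise non-conjugate for distinct $|s|$, and then Lemma~\ref{Lem:equivalent} forces the fibrations to be inequivalent.

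The main obstacle I anticipate is the $\overline{E(2)}_s$ case: one must identify the orbit of a horizontal line under the screw motion explicitly as the integral curves of $v_t$ and then verify that tilted initial lines (neither horizontal nor vertical) give three-dimensional, overlapping orbits and so no further fibrations. The other delicate bookkeeping is the low-dimensional negative cases, particularly ensuring that $SO(2)\times T(1)$ really does contribute nothing despite having the right dimension to act transitively on a two-dimensional base.
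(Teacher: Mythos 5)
Your overall strategy is the same as the paper's: reduce to a closed connected transitive group via Lemma~\ref{Lem:ClosedConnected}, run through the list of conjugacy classes, and extract the $\F_t$. One step, however, is wrong as stated: your dismissal of $G=E(2)$. It is not true that ``every orbit of a line sits inside a single horizontal plane'': the $E(2)$-orbit of a vertical line is the family of \emph{all} vertical lines, which fills $E^3$ and is a fibration (equivalent to $\F_0$), and $E(2)$ acts transitively on it. So $E(2)$ belongs with your positive cases, not your negative ones; the paper handles it by showing the fiber through the origin must be the $z$-axis (otherwise a rotation about the origin produces two distinct fibers crossing transversely) and that its orbit is then equivalent to $\F_0$. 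Since the whole point of the case analysis is exhaustiveness, this case must be repaired --- fortunately the repair produces nothing new, so the final list is unchanged. Two smaller points of the same kind: in the $\overline{E(2)}_s$ case you should also treat a \emph{vertical} initial line, whose orbit is again the vertical fibration $\simeq \F_0$; and in the $SO(2)\times T(1)$ case the claim that the $z$-axis ``must therefore be a fiber'' needs the one-line argument that otherwise the rotations fixing the origin would give two distinct fibers through that point. (Your $\F_{1/s}$ versus the paper's $\F_s$ for the screw group is only a difference of convention for ``pitch'' and is harmless.)

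Where you genuinely diverge from the paper is the inequivalence step at the end. The paper notes that $v_t$ is a curl eigenfield with eigenvalue $t$ and that isometries preserve curl eigenvalues up to sign, so distinct $t\geq 0$ give inequivalent fibrations. You instead identify the identity component of the full symmetry group of $\F_t$ ($E(2)\times T(1)$ for $t=0$, a conjugate of a screw group $\overline{E(2)}_s$ for $t>0$) and invoke Lemma~\ref{Lem:equivalent} together with non-conjugacy of screw groups of different pitch. This route is sound, and it is in fact the style of argument the paper falls back on in the hyperbolic case; but it obliges you to prove maximality of the symmetry group (e.g.\ by checking against the same list that no $4$- or $6$-dimensional connected group preserves $\F_t$ for $t\neq 0$) and to justify that screw groups of distinct pitch are non-conjugate in $\Isom(E^3)$ (conjugation by an isometry preserves the ratio of rotation angle to translation length up to sign). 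The paper's curl argument is shorter once stated; either way the conclusion holds.
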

    \begin{proof}
      First we will show that each $\F_t$ is fiberwise homogeneous. Then we will show that no other fibrations are fiberwise homogeneous. Then we will show that no $\F_t$ is equivalent to $\F_{t'}$ for $t,t'\geq 0$ and $t\neq t'$.
    
      First note that $T(3)$ (for example) acts transitively on $\F_0$, and that $\overline{E(2)}_t$ acts transitively on $\F_t$ for $t\neq 0$, so that these fibrations really are fiberwise homogeneous.
      
      Now, let $\F$ be a fiberwise homogeneous fibration $\F$ by geodesics, and let $G$ be a group of isometries of $E^3$ acting transitively on $\F$. Without loss of generality, by Lemma~\ref{Lem:ClosedConnected} we may assume that $G$ is closed and connected. By replacing $\F$ with an equivalent fibration if necessary (Lemma~\ref{Lem:equivalent}), we may also assume that $G$ is exactly the representative of its conjugacy class which is listed in the table above. We will now show that $\F$ must be $\F_t$ for some $t$.
      \begin{itemize}
        \item Suppose $G=\{1\}$, $T(1)$, $SO(2)$, or $\overline{SO(2)}_t$. Then the dimension of the group is too small to act transitively on $\F$, because the image of a single fiber under $G$ will be either 1- or 2-dimensional. This is a contradiction.
        \item Suppose $G=SO(2)\times T(1)$. Consider the fiber $F$ through the origin. If $F$ is the $z$-axis, then $G$ fixes it and hence cannot act transitively on $\F$. If $F$ is not the $z$-axis, then a rotation in $G$ which fixes the origin will take $F$ to a different line through the origin, and hence $G$ does not preserve $\F$. This is a contradiction.
        \item Suppose $G=T(2)$, and consider the fiber $F$ through the origin. If $F$ lies in the $xy$-plane, then $G$ cannot take $F$ out of the $xy$-plane and hence does not act transitively on $\F$. If $F$ is transverse to the $xy$-plane, then the image of $F$ under $G$ is a fibration by parallel straight lines and hence is equivalent to $\F_0$.
        \item Suppose $G=\overline{E(2)}_t$. Consider the fiber $F$ through the origin. If $F$ is transverse to the $xy$-plane, then a screw-translation around the origin by an angle of $\pi$ will take $F$ to a fiber which intersects $F$ transversely, and hence $G$ does not preserve $\F$, a contradiction. If $F$ lies in the $xy$-plane, then the image of $F$ under $G$ is equivalent to the fibration $\F_t$: translations move it to its parallel translates in the $xy$-plane, and screw-translations move the $xy$-plane in the $z$-direction while controlling the angle of the lines. 
        \item Suppose $G=T(3)$. Let $F$ be any fiber of $\F$. The image of $F$ under $G$ is equivalent to $\F_0$, so $\F=\F_0$.
        \item Suppose $G=E(2)$ or $G=E(2)\times T(1)$. Consider the fiber $F$ through the origin. If $F$ is not the $z$-axis, then a rotation of the $xy$-plane about the origin takes $F$ to a line which intersects $F$ transversely, a contradiction. Thus $F$ is the $z$-axis, and the image of $F$ under $G$ is equivalent to $\F_0$.
        \item Suppose $G=SO(3)$ or $G=E(3)$. Consider the fiber $F$ through the origin. Apply a rotation about the origin which does not fix $F$. Then the image of $F$ is a line which intersects $F$ transversely, and so $G$ does not preserve $\F$, a contradiction.
      \end{itemize}
      Thus we conclude that $\F$ is equivalent to $\F_t$ for some $t$.
      
      Finally we show that no two such fibrations are equivalent. Observe that the unit vector field $v_t$ along $\F_t$ is an eigenfield for the curl operator on $E^3$ with eigenvalue $t$. Curl eigenfields remain curl eigenfields after an isometry and the eigenvalue is preserved up to sign (depending on the orientation of the isometry), so no two such fibrations are equivalent.
    \end{proof}
    
    \section{Hyperbolic 3-space}\label{Sec:Hyp}
    We classify fiberwise homogeneous fibrations of Hyperbolic 3-space by geodesics by the same method as for Euclidean 3-space. We start with a description of the closed connected groups of isometries of Hyperbolic 3-space. The list below can be found in \cite{shaw1970subgroup}. Just as with the analogous list for Euclidean 3-space, the list in \cite{shaw1970subgroup} is actually of Lie subalgebras of the Lie algebra corresponding to the isometry group of Hyperbolic 3-space. To be precise, the theorem actually discusses the Lie algebra for the group $SO(1,3)$, but this group is isomorphic to the isometry group of $H^3$.
    
    In the following list, each item is once again written as ``Name of group, dimension of group. Description of group.'' In the description of the group, we choose a concrete representative of the conjugacy class. We will use some basic facts about the geometry of Hyperbolic 3-space; see Thurston \cite{thurston1997three} for more information. Sometimes it will be useful for us to use the upper half-space model of Hyperbolic 3-space, which we give the coordinates $(z,x)\in \C\times \R^+$, and we denote points on the plane at infinity as $(z,0)$. Other times we may use the Poincar\'e disk model, which we give the coordinates $v=(x,y,z)\in\R^3$ with $\|v\|<1$, and with the sphere at infinity consisting of those $v$ with $\|v\|=1$.
    
    We remind the reader that geodesics in either of those models consist of straight lines or arcs of circles, both of whose endpoints meet the plane at infinity or sphere at infinity orthogonally. In the upper half-space model, that includes vertical lines (i.e.~sets of the form $\{z\}\times\R^+$). We also take advantage of the extremely useful fact that orientation-preserving isometries of Hyperbolic 3-space extend to M\"obius transformations of the sphere or plane at infinity (see \cite{thurston1997three}); in what follows we will describe a group of hyperbolic isometries by its effect on the plane or sphere at infinity.
    
    Closed connected groups of isometries of $H^3$:
    \begin{itemize}
      \item $\{1\},\dim 0.$ The identity.
      \item $Hyp,\dim 1.$ The group of dilations of the plane at infinity in the upper half-space model. These are known as hyperbolic transformations.
      \item $Par,\dim 1.$ The group of translations of the plane at infinity in the upper half-space model by real directions. These are known as parabolic transformations.
      \item $Ell,\dim 1.$ The group of rotations of the plane at infinity about the origin. These are known as elliptic transformations. 
      \item $Lox,\dim 1.$ The group of screw-dilations of the plane at infinity about the origin. The infinitesimal generator of this group is a linear combination of the infinitesimal generators for the hyperbolic and elliptic groups $H(1)$ and $E(1)$. There are actually an infinite number of conjugacy classes of groups of this type, corresponding to different ratios of this linear combination. ``$Lox$'' is for loxodromic.
      \item $T(2),\dim 2.$ The group of translations of the plane at infinity.
      \item $\langle Hyp,Par \rangle,\dim 2.$ The group generated by the hyperbolic transformations and parabolic transformations: translations of the plane at infinity in real directions and dilations about points on the real axis.
      \item $\langle Ell,Hyp \rangle,\dim 2.$ The group of dilations and rotations about the origin in the plane at infinity.
      \item $Hom,\dim 3.$ The homothety group of the plane at infinity, consisting of translations and dilations.
      \item $ScrewHom,\dim 3.$ The screw-homothety group of the plane at infinity, generated by translations as well as a loxodromic group (of type $Lox$ above). Just as with the loxodromic groups, there are infinitely many nonequivalent groups of this type.
      \item $E(2),\dim 3.$ The group of Euclidean transformations of the plane at infinity. Generated by translations and rotations.
      \item $H(2),\dim 3.$ The identity componenet of the group of isometries of a totally geodesic Hyperbolic plane. Concretely, we choose the hyperbolic 2-plane in the Poincar\'e disk model with $z=0$. This group happens to be isomorphic to $PSL(2,\R)$, though we will not need this fact.
      \item $SO(3),\dim 3.$ The group of orthogonal transformations of the sphere at infinity in the Poincar\'e disk model.
      \item $Sim,\dim 4.$ The group of similitudes of the plane at infinity. Generated by translations, rotations, and dilations.
      \item $H(3),\dim 6.$ The identity component of the full isometry group of Hyperbolic 3-space.
    \end{itemize}
    That's the full list of conjugacy classes of connected closed subgroups of the isometry group of $H^3$.
    
    We now describe the fiberwise homogeneous fibrations of Hyperbolic 3-space by geodesics. We use the upper half-space model. To start, there's one special fibration which is qualitatively different from the others. Let $\F_{\infty}$ be the fibration consisting of vertical lines, i.e.~lines of the form $\{z\}\times\R^+$. We will see that the remaining fibrations are similar to one another. Recall that we have identified the plane at infinity with $\C\times\{0\}$. Let's ignore the second coordinate and just identify the plane at infinity with $\C$. Define the set $S\subset \C$ by
      \[ S = \{z\in\C\ : \Im z \geq 1, \Re z \geq 0 \}. \]
    That is, $S$ is the first quadrant in the complex plane, shifted one unit in the positive imaginary direction. For each $z\in S$, we define an unparametrized geodesic $\gamma_z$ in Hyperbolic 3-space by assigning its endpoints in $\C$ to be $-i$ and $z$. Now let the group $\langle Hyp,Par \rangle$ act on $\gamma_z$. The image is all of $H^3$ and thus forms a fibration of $H^3$, which we name $\F_z$. To see this, note that $Par$ translates $\gamma_z$ in real directions, so the image of $\gamma_z$ under $Par$ is an infinitely long tunnel over the real line in the plane at infinity. The image of this tunnel under $Hyp$ is all of $H^3$, because $Hyp$ consists of dilations of the plane at infinity about points on the real line. Thus $\langle Hyp,Par \rangle$ acts transitively on the fibration $\F_z$. See Figure~\ref{Fig:Fz}.
    \begin{figure}[h]
      \centering
      \includegraphics[width=0.5\textwidth]{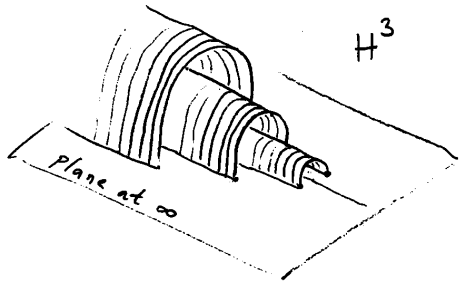}
      \caption{A typical picture of an $\F_z$ in the upper half-space model of $H^3$.}
      \label{Fig:Fz}
    \end{figure}
    
    We're technically not restricted to choosing $\gamma_z$ with $z$ in $S$. We could just as easily choose any $z$ with $\Im > 0$ and define $\gamma_z$ in the same way. But then we would have some redundancies in our list: some $\gamma_z$ would be equivalent to $\gamma_{z'}$ for $z \neq z'$.
    
    \begin{thm}\label{Thm:HyperbolicFibrations}
      The fiberwise homogeneous fibrations of Hyperbolic 3-space by geo-desics, up to equivalence, are precisely the fibrations $\F_z$ for $z\in S$, together with $\F_{\infty}$.
    \end{thm}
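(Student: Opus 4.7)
My plan is to mirror the proof of Theorem~\ref{Thm:EuclideanFibrations}. Given a hypothetical fiberwise homogeneous fibration $\F$ of $H^3$ by geodesics with transitive subgroup $G\subseteq\Isom(H^3)$, first apply Lemma~\ref{Lem:ClosedConnected} to assume $G$ is closed and connected, then use Lemma~\ref{Lem:equivalent} to replace $G$ by the canonical representative of its conjugacy class from the fifteen-item list above. The exposition preceding the theorem already shows $\langle Hyp,Par\rangle$ acts transitively on each $\F_z$, and it is immediate that $Hom$ acts transitively on $\F_\infty$, so it remains to walk through the fifteen groups and identify which geodesic fibrations each can possibly preserve.

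The five one-dimensional groups are ruled out on dimension grounds: the $G$-orbit of any geodesic is at most two-dimensional and cannot fill $H^3$. For each remaining group $G$, the strategy is to fix a convenient base point $p\in H^3$ (typically $(0,1)$ in the upper half-space model), consider the fiber $F$ through $p$, and use the $G$-stabilizer of $p$ to constrain the direction of $F$. The groups with substantial rotational isotropy --- namely $\langle Ell,Hyp\rangle$, $E(2)$, $H(2)$, $SO(3)$, $Sim$, and $H(3)$ --- either force $F$ to coincide with a unique distinguished geodesic invariant under the rotational subgroup (in which case the $G$-orbit of $F$ is quickly identified as an equivalent copy of $\F_\infty$), or else some element of the stabilizer moves $F$ to a transverse geodesic, contradicting that $G$ preserves a foliation. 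For $T(2)$ and $Hom$, translating any non-vertical geodesic produces horizontal translates meeting transversely in $H^3$, so $F$ must be vertical and the fibration is $\F_\infty$. For $\langle Hyp,Par\rangle$, varying the endpoints of $F$ in the upper half-plane at infinity produces precisely the two-parameter family $\F_z$. Finally, $ScrewHom$ is contained in $Sim$ and so preserves only $\F_\infty$.

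The main obstacle is pinning down why $S=\{z:\Im z\geq 1,\ \Re z\geq 0\}$, and not some larger region of the upper half-plane, is the correct parameter space. I would settle this by computing the normalizer $N$ of $\langle Hyp,Par\rangle$ in $\Isom(H^3)$: modulo $\langle Hyp,Par\rangle$ itself, $N$ is generated by two involutions. Conjugation by the reflection $w\mapsto-\bar w$ of the plane at infinity normalizes $\langle Hyp,Par\rangle$ and carries $\F_z$ to $\F_{-\bar z}$, accounting for the condition $\Re z\geq 0$. A second element, the orientation-preserving isometry whose boundary action is $w\mapsto(\Re z_0-w)/\Im z_0$ for a distinguished base parameter $z_0$, normalizes $\langle Hyp,Par\rangle$ and induces the involution $z\mapsto(\Re z+i)/\Im z$ on the parameter, swapping the regions $\Im z>1$ and $\Im z<1$ and yielding the condition $\Im z\geq 1$. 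Within $S$ the only identification is the identity, so distinct $z\in S$ give inequivalent $\F_z$; and $\F_\infty$ is inequivalent to any $\F_z$ because all fibers of $\F_\infty$ share a common asymptotic endpoint at infinity while those of $\F_z$ do not.
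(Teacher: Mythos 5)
Your overall architecture is the same as the paper's, and your uniqueness step via the normalizer of $\langle Hyp,Par\rangle$ is essentially the paper's argument written in coordinates: the induced identifications $z\mapsto-\overline{z}$ and $z\mapsto(\Re z+i)/\Im z$ agree with the paper's chord-ratio computation (which yields $\Im z'=1/\Im z$), and $S$ is indeed a fundamental domain for them. But your treatment of the ``rotational isotropy'' groups contains a genuine error. For $G=H(2)$, the stabilizer of the origin in the disk model does force the fiber through the origin to be the diameter orthogonal to the invariant hyperbolic $2$-plane, but the $H(2)$-orbit of that geodesic is the fibration by all geodesics orthogonal to that plane, which is $\F_i$, not a copy of $\F_\infty$ (the fibers of $\F_\infty$ share a common asymptotic endpoint; these do not). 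Similarly, for $\langle Ell,Hyp\rangle$ the invariant geodesic is fixed by the whole group, so its orbit is a single geodesic and the group cannot act transitively at all; that case is eliminated, not identified with $\F_\infty$. The $H(2)$ misidentification matters downstream: $\F_i$ is precisely the member of your family whose symmetry group is strictly larger than $\langle Hyp,Par\rangle$, and your fundamental-domain argument tacitly assumes that any equivalence $T$ with $T(\F_z)=\F_{z'}$ satisfies $TGT^{-1}=G$ for $G=\langle Hyp,Par\rangle$, i.e., that $G$ is the identity component of the full symmetry group of $\F_{z'}$. That is false for $z'=i$, so you need the extra step (present, if tersely, in the paper) that $\F_i$ can only be equivalent to a fibration whose symmetry group also contains a conjugate of $H(2)$, hence only to itself, and that for $z\neq i$ no listed group strictly containing $\langle Hyp,Par\rangle$ acts transitively on $\F_z$; only then does $T$ land in the normalizer.

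Two smaller gaps. The claim that $ScrewHom$ ``is contained in $Sim$ and so preserves only $\F_\infty$'' has the containment running the wrong way: a subgroup of $Sim$ may act transitively on fibrations that $Sim$ does not preserve (indeed $\langle Hyp,Par\rangle\subset Sim$ acts transitively on every $\F_z$). The correct observation is that $ScrewHom$ contains $T(2)$, so your translation argument applies to it directly, exactly as the paper lumps it with $T(2)$, $Hom$, $E(2)$, and $Sim$. Finally, in the $\langle Hyp,Par\rangle$ case, ``varying the endpoints of $F$'' is not yet a proof: you must rule out a vertical fiber and a semicircular fiber with both endpoints on the same side of the real axis, since in those cases the $G$-orbit fails to fill $H^3$ (the group preserves the sign of the imaginary part of boundary points); only after forcing the fiber to be a semicircle whose endpoints are separated by the real axis can you normalize its endpoints to $-i$ and $z$ and conclude $\F=\F_z$. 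With these repairs your proposal reproduces the paper's proof.
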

    \begin{proof}
      Just as with Euclidean 3-space, the proof of this theorem is in three parts. First, we demonstrate that the fibrations we list in the statement really are fiberwise homogeneous, then we prove that there are no others, and then we prove that our list has no redundancies.
      
      First, note that $\langle Hyp, Par \rangle$ acts transitively on each $\F_z$, by construction, so they are all fiberwise homogeneous. Note also that $Sim$ (the group of similitudes of the plane at infinity) acts transitively on $\F_{\infty}$, so it is fiberwise homogeneous as well.
      
      Now let $\F$ be a fiberwise homogeneous fibration of $H^3$ by geodesics, and let $G$ be a group of isometries acting transitively on $\F$. By Lemma~\ref{Lem:ClosedConnected}, we may assume $G$ is closed an connected. By replacing $\F$ by an equivalent fibration if necessary, we may assume that $G$ appears on the list above of closed connected groups of isometries of $H^3$.
      \begin{itemize}
        \item Suppose $G=\{1\}$, $Hyp$, $Par$, $Ell$, or $Lox$. As in the Euclidean case, the image of a fiber under $G$ will have dimension only $1$ or $2$ and hence cannot fill all of $H^3$. This is a contradiction.
        \item Suppose $G=T(2)$, $Hom$, $ScrewHom$, $E(2)$, or $Sim$. Let $F$ be any fiber in $\F$, in the upper half-space model. Suppose $F$ is a semicircle meeting the plane at infinity orthogonally. Then its endpoints form a line in the plane at infinity. Apply a translation to $F$ along this line. Then $F$ moves to another geodesic which intersects $F$ transversely, so $G$ does not preserve $\F$. This is a contradiction, so in fact $F$ must be a vertical line. Now the image of $F$ under $G$ is $\F_{\infty}$, so $\F=\F_{\infty}$.
        \item Suppose $G=\langle Hyp,Par \rangle$. Let $F$ be any fiber. Suppose $F$ is a vertical line in the upper half-space model. Then the image of $F$ under $G$ will fill out only points with positive imaginary part, or only points with negative imaginary part, or points with only zero imaginary part, depending on where the endpoint of $\F$ is. Thus $G$ does not act transitively on $\F$, a contradiction. Thus $F$ must be a semi-circle in the upper half-space model. As we just observed, $G$ preserves the set of points with positive (respectively negative, respectively zero) imaginary part, so $F$ must have one endpoint with positive imaginary part and one endpoint with negative imaginary part on the plane at infinity. Then the image of $F$ under $G$ fill out $H^3$ and hence $\F=\F_z$ for some $z\in \C$. If $z$ is not in $S$, then $\F$ is equivalent to $\F_{z'}$ for $z'\in S$: simply apply to $\F$ a rotation of $\pi$ about the origin and/or a reflection in the imaginary line in the plane at infinity.
        \item Suppose $G=\langle Ell,Hyp \rangle$. Let $F$ be the fiber of $\F$ in the upper half-space model passing through $(0,1)\in \C\times\R^+$. Suppose $F$ is a semicircle. Then a rotation applied to $F$ moves $F$ to a semicircle intersecting $F$ transversely. Thus $G$ does not preserve $\F$, a contradiction. Therefore $F$ must be a vertical line. But then $G$ fixes $F$, rather than acting transitively on $\F$, also a contradiction.
        \item Suppose $G=H(2)$. Let $F$ be the fiber through $0\in\R^3$ in the Poincar\'e disk model. The isometry group of the hyperbolic 2-plane consisting of points $(x,y,z)\in\R^3$ with $z=0$ contains the group of rotations about the $z$-axis. If $F$ is not the portion of the $z$-axis with norm less than 1, then these rotations take $F$ to a line segment which intersects $F$ transversely. Thus $F$ is a portion of the $z$-axis. Now the image of $F$ under $G$ defines a fibration of $H^3$ consisting of all the geodesics orthogonal to a fixed hyperbolic \mbox{2-plane}. Moving from the disk model to the upper half-space model, take this hyperbolic 2-plane to be the 2-plane over the real line in the plane at infinity. We see that $\F$ is precisely the fibration $\F_i$.
        \item Suppose $G=SO(3)$ or $H(3)$. Consider the fiber $F$ through the origin in the Poincar\'e disk model. Apply some rotation of the sphere at infinity which does not fix $F$; that moves $F$ to a line which intersects $F$ transversely, a contradiction.
      \end{itemize}
      Thus $\F$ is equivalent to some $\F_z$ for $z\in S$, or $\F$ is $\F_{\infty}$.
      
      We close out the chapter by showing that no two fiberwise homogeneous fibrations in our list are equivalent to one another. Whereas in the Euclidean case we made use of the fact that the unit vector fields along our fibrations were curl eigenfields with distinct eigenvalues, here we are unable to find so pretty an argument. What follows is rather technical and unenlightening. 
      
      Suppose that $\F_z$ is equivalent to $\F_{z'}$ for $z\neq z'$ and both $z$ and $z'$ in $S$. It's clear that $\F_{\infty}$ is not equivalent to $\F_z$ for any $z$, and that $\F_i$ is not equivalent to $\F_z$ for $z\neq i$; by the above discussion, they have different symmetry groups. Thus $G=\langle Hyp,Par \rangle$ is the identity component of the symmetry group of $\F_z$ and $\F_{z'}$.
      
      Let $T$ be an isometry of $H^3$ taking $\F_z$ to $\F_{z'}$. Then $TGT^{-1}(\F_{z'}) = \F_{z'}$, so $T$ is in the normalizer of $G$. If $\F_z$ equals $\F_{z'}$ as fibrations, we're done, so assume that $T$ is not in $G$. At the sphere of infinity in the disk model, $G$ fixes exactly one point (if we identify the sphere at infinity with the Riemann sphere $\C\cup \{\infty\}$, with $\C$ the plane at infinity in the half-space model, then $G$ fixes $\infty$). Therefore $T$ must fix that point as well. Thus $T$ acts as a similitude of the plane at infinity in the upper half-space model, possibly orientation-reversing.
      
      The group $G$ also preserves the real line (and no other affine line) in the plane at infinity, so $T$ must preserve this line as well. The similitudes of the plane which accomplish this are generated by $G$ and by the following transformations: rotations of the plane at infinity by $\pi$ about 0, reflections in the real axis, and reflections in the imaginary axis. The action of $G$ preserves $\F_z$, so we can focus on the effect of the last three transformations on $\F_z$.
      
      Reflection in the imaginary axis takes the geodesic joining $-i$ to $z$ to the geodesic joining $-i$ to $-\overline{z}$, so it takes $\F_z$ to $\F_{-\overline{z}}$. If $z$ and $-\overline{z}$ are both in $S$ then we must have $z=-\overline{z}=z'$, with $z$ on the imaginary axis.
      
      Consider the line segment joining $-i$ to $z$ in the plane at infinity. The real axis splits this line segment into two pieces, the ratio of whose sizes (top to bottom) is $\Im z$ to $1$.
      
      Consider the geodesic fiber $F$ in $\F_z$ which has one endpoint at $i$, and call the other endpoint $w$. The real axis also cuts the line segment joining $i$ and $w$, and the ratio of their lengths (top to bottom) is also $\Im z$ to $1$. Applying a rotation of the complex plane by $\pi$ about the origin to $\F_z$, or applying a reflection in the real axis, moves $F$ so that one endpoint is now at $-i$, and $w$ is taken to $z'$.
      
      Consider the line segment joining the endpoints of the geodesic joining $-i$ to $z'$. The ratio of the lengths (top to bottom) is now $1$ to $\Im z$, because we have flipped it upside down. Thus for a rotation by $\pi$ or a reflection about the real axis, we have $\Im z' = 1/\Im z$. If $z$ and $z'$ are both in $S$, then we must have $\Im z = \Im z' = 1$. We also must have just applied a rotation and not a reflection, because reflection takes $z$ in $S$ to $z'$ with nonpositive real part, and we assumed $z\neq i$. Therefore the angle which the segment from $-i$ to $z$ makes with the real axis is the same as the angle which the segment from $-i$ to $z'$ makes, and we must have $z=z'$.
      
      Thus no two of the fiberwise homogeneous fibrations by geodesics in the statement of the theorem are equivalent.
    \end{proof}

\bibliography{thesis}{}
\bibliographystyle{amsplain}
\end{document}